\def \r{\mathbb R}
\def \d{\delta}
\newtheorem{theorem}{Theorem}[section]
\newtheorem{lemma}[theorem]{Lemma}
\newtheorem{proposition}[theorem]{Proposition}
\newtheorem{corollary}[theorem]{Corollary}
\theoremstyle{remark}
\theoremstyle{definition}
\newtheorem{definition}[theorem]{Definition}
\numberwithin{equation}{section}
\title {Bernoulli-Euler numbers and multiboundary singularities of type
$B_n^l$}
\author{Oleg Karpenkov}
\address{Mathematisch Instituut, Universiteit Leiden,
\\ P.O. Box 9512, 2300 RA Leiden, The Netherlands}%
\thanks{The work is partially supported by RFBR NSh-4719.2006.1
grant, by RFBR 05-01-02805-CNRSL\_a grant, NWO-DIAMANT
613.009.001 grant, and by RFBR grant 05-01-01012a.}
\begin{document}
\input epsf
\begin{abstract}
In this paper we study properties of numbers $K_n^l$ of connected
components of bifurcation diagrams for multiboundary
singularities $B_n^l$. These numbers generalize classic
Bernoulli-Euler numbers. We prove a recurrent relation on the
numbers $K_n^l$. As it was known before, $K^1_n$ is $(n{+}1)$-th
Bernoulli-Euler number, this gives us a necessary boundary
condition to calculate $K_n^l$. We also find the generating
functions for $K_n^l$ with small fixed $l$ and write partial
differential equations for the general case. The recurrent
relations lead to numerous relations between Bernoulli-Euler
numbers. We show them in the last section of the paper.
\end{abstract}

\maketitle \tableofcontents

\section{Introduction}

Like binomial coefficients and Fibonacci numbers Bernoulli-Euler
numbers $K_n$ are widely used in many different branches of
mathematics. We define Bernoulli-Euler numbers as Taylor series
coefficients for the function $\sec+\tan$, namely
$$
\sec t+\tan t =\sum \limits_{n=0}^{\infty}K_n\frac{t^n}{n!}.
$$
There exists an equivalent definition of Bernoulli-Euler numbers
by means of a ``classical triangle'', one can construct $K_n$ by
analogy with finding binomial coefficients using Pascal triangle.
We refer the reader to~\cite{Arn21} for the detailed description
of the triangle for Bernoulli-Euler numbers. For arithmetical
properties of Bernoulli-Euler numbers we refer to
papers~\cite{Arn21}, \cite{Knu}, and~\cite{Nie}).

We study one of the geometrical aspects of the numbers $K_n$ in
singularity theory. As it was shown by V.~I.~Arnold
in~\cite{Arn22}, the combinatorics of the components of the set
of very nice M-morsifications for the boundary singularities of
type $B_n$ is closely related to the combinatorics of the
corresponding Springer cones. In~\cite{Arn21} V.~I.~Arnold also
proved that the numbers of connected components coincides with
Bernoulli-Euler numbers.

In this paper we give proofs of theorems announced by author
in~\cite{Ka}. We deal with a generalization of boundary
singularities $B_n$ for the functions on the line to the case of
multiboundary singularities $B_n^l$ with boundaries consisting of
$l$ points. The numbers of connected components of the set of very
nice M-morsifications for $B^l_n$ (denoted by $K_n^l$) is in is
turn a natural generalization of Bernoulli-Euler numbers. In
particular, we prove the recurrent relation on the numbers
$K_n^l$:
$$
K_{n-2}^{l+1}=K_n^l-nlK_n^{l-1}.
$$
Note also that the numbers $K_n^l$ also enumerate certain strata
of singularities of $A_{2l+n-1}$, see Corollary~\ref{strati}.

This work is organized as follows. We start in Section~2 with
necessary notions and definition, in particular we give
definitions of singularities of type $B_k^l$. In Section~3 we
formulate and prove the main theorem on recurrence relation for
numbers $K_n^l$. Further in Section~4 we study the case of small
number of boundary points $l$. We give explicit formulae for the
numbers $K_n^l$ in this case. Finally, in Section~5 we show
general expressions for $K_n^l$ in terms of Bernoulli-Euler
numbers.

The author is grateful to V.~I.~Arnold, V.~M.~Zakalyukin,
S.~K.~Lando, and B.~Z.~Shapiro for useful remarks and help in the
realization of this work, and Mathematisch Instituut of
Universiteit Leiden for the hospitality and excellent working
conditions.

\section{Definition of $B_n^l$ singularities}

Let $N^m$ be a smooth manifold, $U^m$ be a smooth manifold with
smooth boundary, and $\pi :U^m \rightarrow N^m$ --- an immersion,
i.e. a smooth embedding with nondegenerate Jacobian matrix at any
point. We also suppose that the number of preimages of any point
of $N^m$ is bounded from above by some constant. Let $f$ be a
smooth real function on the manifold $N^m$. Denote by $\hat f$
the lifting $f\circ \pi :U^m \rightarrow \r$.

Let the preimage of the point $x_0$ in $N^m$ consists of $l$
boundary points of $U^m$ and any number of nonboundary points of
$U^m$. Let also the collection of $l$ images of tangent
hyperplanes to the boundary points projecting to $x_0$ is a
collection of hyperplanes in general position: the intersection
of any $s$ (for $s \le l$) hyperplanes is a $(m{-}s)$-dimensional
plane.

\begin{definition}\label{d-a}
We say that the function $\hat f$ has a {\it multiboundary
singularity of type $B_n^l$} at the preimage $\pi^{-1}(x_0)$, if
$f$ has a singularity $A_{n-1}$ at point $x_0$. In addition it is
required that the kernel of the Hessian matrix at $x_0$ is
transversal to the projection of any tangent hyperplane at the
boundary point in the preimage $\pi^{-1}(x_0)$.
\end{definition}

Let us remind a definition of a very nice M-morsification of the
boundary singularity $B_{\mu}$ of the paper~\cite{Arn22}.

Consider the space $\r ^{\mu -1}$ of real polynomials with zero
constant turm
$$
x^{\mu}+\lambda _1x^{\mu -1}+ \ldots +\lambda _{\mu -1}x
$$
on the real line with fixed ``boundary'' $x=0$.

\begin{definition}\label{d-a1}
{\it A very nice M-morsification of the boundary singularity
$B_{\mu}$} is a polynomial of this family, whose all $\mu {-}1$
critical points are real and the values at these points are
distinct  and nonzero (i.e. also distinct to the value at the
boundary $x=0$).
\end{definition}
Let us generalize Definition~\ref{d-a1} to the case of
multiboundary singularities of type $B_n^l$.

Consider the Cartesian product of the spaces of real polynomials
($\r ^{n-1}$)
$$
x^n+\lambda _2 x^{n-2}+ \cdots +\lambda _{n-1}x
$$
and the space $\r ^l$ of the space of ``boundary values'' at the
points $x=b_i$, for $i=1,\ldots, l$. We call the points $b_i$ the
{\it boundary points}.

\begin{definition}\label{d-a2}
{\it A very nice M-morsification of the multiboundary singularity
$B_n^l$} is a degree $n$ polynomial with $l$ marked boundary
points, whose all $n{-}1$ critical points are real, and the
values at critical points and at $l$ boundary values are all
pairwise distinct.
\end{definition}

Notice that we enumerate all boundary points, otherwise we should
consider the factor of the space $\r^l$  by the group of all
permutations of the coordinates. We enumerate the boundary points
since they correspond to distinct branches, whose permutations do
not make sense here.

\begin{definition}\label{d-a22}
The {\it M-domain} is the closed subset of the space of all
polynomials $x^n+\lambda_2x^{n-2}+\cdots+ \lambda_{n-1}x$,
consisting of polynomials which critical points are real.
\end{definition}

The set of very nice M-morsifications is an open subset of $\r
^{n-1} \times \r ^l$, and its closure is $(${\it M-domain}$)
\times \r ^l$. The set of very nice M-morsifications splits in
connected components by the bifurcation diagram consisting of
five hypersurfaces. Three of these hypersurfaces come from the
case of boundary singularities of type  $B_n$ (see also in
~\cite{Arn22}):
\begin{description}
\item[(a)] the boundary caustic consisting of functions with a boundary
critical point;
\item[(b)] the ordinary Maxwell stratum consisting of functions with
equal critical values at different points;
\item[(c)] the boundary Maxwell stratum consisting of functions having
some value at the boundary being equal to some critical value (the
corresponding critical point is not at the boundary).
\end{description}
Notice that in our notation $B_n=B_n^1$, moreover the boundary
point does not fixed at zero. So the definition of the very nice
M-morsification of the boundary singularity $B_n$ in the
paper~\cite{Arn22} is a special case of Definition~\ref{d-a2}.

In the case of multiboundary immersion singularities of type
$B_n^l$ for $l\ge 2$ we have additional two hypersurfaces:
\begin{description}
\item[(d)] the double boundary caustic consisting of functions with
some double boundary point;
\item[(e)] the double boundary Maxwell stratum consisting of functions
with equal values at different boundary points.
\end{description}

\section{Recurrent relation on numbers $K_n^l$}

We use $K_n$ to denote Bernoulli-Euler numbers (let us list some
first numbers $K_n$ starting from $n=0$:
$1,1,1,2,5,16,61,\ldots$). Denote by $K_n^l$ the number of
connected components of the set of very nice M-morsifications of
the singularity $B_n^l$. The Bernoulli-Euler numbers are boundary
conditions for the numbers $K_n^l$, namely $K_n^0=K_{n-1}$ and
$K_n^1=K_{n+1}$ (see in~\cite{Arn21}).

\begin{theorem}\label{t-a1}
Let $n\ge 2$ and $l\ge 1$. Then the following identity holds.
$$
K_{n-2}^{l+1}=K_n^l-nlK_n^{l-1}.
$$
\end{theorem}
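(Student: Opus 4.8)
The plan is to turn the count of connected components into combinatorics of the graph of the morsifying polynomial, and then to establish the equivalent identity $K_n^l = nl\,K_n^{l-1} + K_{n-2}^{l+1}$ by a double-counting argument. First I would fix the combinatorial model. Within the M-domain the $n-1$ critical points can never collide (a collision leaves the M-domain), so their left-to-right order $c_1<\cdots<c_{n-1}$ is locally constant and the graph is cut into $n$ monotone arcs with value-intervals determined by the neighbouring critical values. I would argue that a component of the set of very nice M-morsifications of $B_n^l$ (Definition~\ref{d-a2}) is recorded exactly by the following discrete data: the combinatorial type (``snake'') of the degree-$n$ polynomial, i.e. the linear order of its $n-1$ critical values compatible with the up--down shape; and, for each of the $l$ labelled boundary points, the arc it sits on together with the rank of its value among all $n-1+l$ values, constrained to lie in the value-interval of that arc. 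Crossing the walls (a) and (d) changes the $x$-interleaving of a boundary point with a critical point or another boundary point, while (b), (c), (e) transpose two adjacent values; these are precisely the elementary moves on the data. As a sanity check this recovers $K_n^0=K_{n-1}$ (the snakes) and $K_n^1=K_{n+1}$.

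Second, I would read the two terms of the right-hand side off a decomposition of the components of $B_n^l$ into two families $\mathcal S$ and $\mathcal T$. For $\mathcal T$ I would use a degree-lowering surgery: in a configuration whose graph carries a canonical \emph{collapsible hump} (an extreme pair of adjacent critical points whose values are consecutive in the value order, enclosing no other feature), collapse that hump to a single point, which becomes a new $(l{+}1)$-st boundary point. The inverse operation grows a small local-max/local-min pair at a distinguished boundary point, raising the degree by $2$ and absorbing that point. I would show this is a bijection $\mathcal T \leftrightarrow \{\text{components of }B_{n-2}^{l+1}\}$, giving $|\mathcal T|=K_{n-2}^{l+1}$. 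For the complementary family $\mathcal S$ I would use the forgetful map $\varphi\colon B_n^l\to B_n^{l-1}$ deleting one labelled boundary point, and show that over each component of $B_n^{l-1}$ the non-collapsible reinsertions form a product of an $n$-element set (the arcs) with an $l$-element set, so that $|\mathcal S| = nl\,K_n^{l-1}$.

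The main obstacle is to make the dichotomy $\mathcal S\sqcup\mathcal T$ precise and to prove both bijections rigorously. The delicate point is the correct definition of the canonical collapsible hump and the verification that the surgery is well defined and reversible: one must check that the up--down alternation of the snake is preserved, that exactly the components in $\mathcal T$ arise as images, and that the two unbounded arcs and the parity of $n$ (which governs the shape of the tails) are handled uniformly. Equally delicate is proving that the reinsertions over $\mathcal S$ really split as a clean $n\times l$ product rather than varying from fibre to fibre, since the naive gap-count of a reinserted boundary value does depend on the configuration; the product structure should emerge only after the collapsible reinsertions have been removed. Finally I would treat the degenerate base cases $n=2$ and small $l$ separately, checking them directly against the boundary conditions $K_n^0=K_{n-1}$ and $K_n^1=K_{n+1}$.
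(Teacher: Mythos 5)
Your reduction to $K_n^l = nl\,K_n^{l-1}+K_{n-2}^{l+1}$ and your discrete model of the components are reasonable, but both central claims of your decomposition are false as stated, and the failure is structural, not a matter of missing detail. The bijection $\mathcal{T}\leftrightarrow\{\hbox{components of }B_{n-2}^{l+1}\}$ breaks because a component of $B_n^l$ can carry \emph{several} collapsible humps. Take $n=6$, $l=1$: critical points $c_1<\dots<c_5$ with values $w_1,\dots,w_5$ ($w_1,w_3,w_5$ minima, $w_2,w_4$ maxima), value order $w_1<w_3<w_2<w_5<w_4$ (an admissible snake), and $b_1$ to the left of $c_1$ with $f(b_1)<w_1$. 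Then both $(c_2,c_3)$ and $(c_4,c_5)$ are collapsible humps; collapsing them produces two \emph{different} components $E_1\neq E_2$ of $B_4^2$, while growing a small hump at $b_2$ in either $E_1$ or $E_2$ returns the same component $D$ of $B_6^1$. So the grow map is not injective on components, and no ``canonical'' choice of hump can repair this: what the local grow/collapse surgery honestly gives is a bijection between components of $B_{n-2}^{l+1}$ and \emph{pairs} (component $D$ of $B_n^l$, collapsible hump of $D$), i.e.\ $K_{n-2}^{l+1}=\sum_D h(D)$ with $h(D)$ the number of collapsible humps. Whenever some $h(D)\ge 2$ (as above), $|\mathcal{T}|<K_{n-2}^{l+1}$, so for $n\ge 6$ even the \emph{totals} of your partition are wrong, not merely unproven.

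The $\mathcal{S}$ count fails independently: the fibres of the forgetful map restricted to non-collapsible components are not constant of size $nl$. For $B_4^2\to B_4^1$ (delete $b_2$), with snake $v_1<v_3<v_2$ at $c_1<c_2<c_3$: over the component where $b_1$ lies on the leftmost arc with $f(b_1)\in(v_1,v_3)$ there are $10$ reinsertions of $b_2$, of which only $4$ are non-collapsible; over the component with $f(b_1)\in(v_3,v_2)$ all $12$ reinsertions are non-collapsible. The average over the $16$ components of $B_4^1$ is indeed $nl=8$, but that average \emph{is} the theorem, so invoking it is circular. The paper's proof avoids both traps by making the hump global rather than local: the $\delta$-shaped bump added at $b_{l+1}$ is chosen so that the free new critical value is the \emph{maximum} (or minimum) of all critical and boundary values. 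Since each component has exactly one top and one bottom, marking a component by a side is canonical, and one counts marked components: this yields $2K_n^l-\hat K_n^l=2L_n^l+2K_{n-2}^{l+1}$, where $\hat K_n^l=2lK_n^{l-1}$ counts sides whose extreme value is a boundary value (Lemma~\ref{l-a2}) and $L_n^l=l(n-1)K_n^{l-1}$ counts boundary-caustic components (Lemma~\ref{l-a1}); the recurrence follows. If you wish to keep your local humps, you must count incidences $(D,H)$ with multiplicity and correct the $\mathcal{S}$ count accordingly, which in effect forces you back into a double-counting scheme of exactly the paper's type.
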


Consider an example of the singularity of type $B_2^3$. We have
$$
K_3^2=K_5^1-5K_5^0=K_6-5K_4=61-5 \cdot 5=36.
$$
We can calculate $K_3^2$ using recurrence relations in a different
way:
$$
K_3^2=K_1^3+2 \cdot 3K_3^1=3!+6 \cdot 5=36.
$$

We start the proof with two lemmas. In these lemmas we use the
following notation.

Denote by $L_n^l$ the number of connected components of the
boundary caustic in the complement to the union of the strata and
caustics of codimension 2. Here we suppose that the function has
one critical point at the projection of a boundary point, all the
rest critical and boundary points are distinct and the values at
them are also distinct.

\begin{lemma}\label{l-a1}
Let $n\ge 2$ and $l\ge 1$. Then the following identity holds.
$$
L_n^l=l(n-1)K_n^{l-1}.
$$
\end{lemma}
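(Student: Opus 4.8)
The plan is to exhibit an explicit bijection between the connected components of the boundary caustic counted by $L_n^l$ and the set of triples $(i,j,C)$, where $i\in\{1,\dots,l\}$ records which boundary point lies on a critical point, $j\in\{1,\dots,n-1\}$ records which critical point it sits on, and $C$ is a connected component of the set of very nice M-morsifications of $B_n^{l-1}$. Since there are $l$ choices for $i$, exactly $n-1$ choices for $j$, and $K_n^{l-1}$ choices for $C$, such a bijection immediately yields $L_n^l=l(n-1)K_n^{l-1}$.

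First I would define the forward map. By the very definition of $L_n^l$, a point of the boundary caustic lying off all codimension-$2$ strata has exactly one boundary point $b_i$ coinciding with a critical point, with all remaining critical and boundary points distinct and all relevant values distinct; this already singles out the label $i$. Because a very nice M-morsification has $n-1$ real critical points with pairwise distinct critical values, these critical points cannot collide along a path inside the stratum, so they stay ordered as $c_1<\dots<c_{n-1}$ and the index $j$ with $b_i=c_j$ is well defined. Deleting the marked point $b_i$ leaves the same degree-$n$ polynomial $P$ together with the remaining $l-1$ boundary points; since the value $P(b_i)$ is merely one of the critical values, all $n-1$ critical values and the $l-1$ surviving boundary values stay pairwise distinct, so the result is a very nice M-morsification of $B_n^{l-1}$ after the harmless order-preserving relabeling of the surviving labels. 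The three data $i$, $j$, $C$ are locally constant along the stratum, so the map descends to connected components.

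Next I would construct the inverse and check bijectivity. Given a very nice M-morsification of $B_n^{l-1}$ and a pair $(i,j)$, I relabel and place an additional boundary point with label $i$ exactly at the $j$-th critical point $c_j$; its value $P(c_j)$ is automatically distinct from the other critical values and from the $l-1$ boundary values, so the configuration lands on the smooth part of the boundary caustic, giving surjectivity. For connectedness of the fibers: along any deformation inside a fixed component $C$ the critical points move continuously without colliding, so $c_j$ varies continuously and carries the stuck point with it, keeping the configuration in a single component of the boundary caustic; conversely, any path on the boundary caustic with $(i,j)$ held fixed projects to a path inside one component $C$. Hence the forward map and this construction are mutually inverse.

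The main obstacle is the rigidity of the two discrete invariants $i$ and $j$, that is, proving they cannot jump within a connected family. The label $i$ is pinned because moving the stuck point off its critical point leaves the caustic altogether, and the index $j$ is pinned because changing it would force two critical points to merge into a degenerate ($A_2$-type) critical point, a codimension-$2$ phenomenon excluded from the stratum. Verifying that these are the only mechanisms by which the invariants could change, and that the distinctness of all values is preserved throughout both directions of the construction, is the delicate point of the argument.
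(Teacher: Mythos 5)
Your proof is correct and takes essentially the same route as the paper: the paper also identifies components of the generic boundary caustic with the data (component of $B_n^{l-1}$, choice of one of the $n-1$ critical points, labeling of the added boundary point), obtaining the factor $l$ via the orbits of the permutation group on boundary labels (of sizes $(l-1)!$ and $l!$) where you instead record the label $i$ directly. Your write-up is more explicit about why the discrete invariants $(i,j)$ cannot jump inside a component (collisions being codimension-$2$ phenomena), but the underlying delete/insert bijection and the resulting count $l(n-1)K_n^{l-1}$ are identical.
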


\begin{proof} We take any very nice morsification with $l{-}1$ boundary
points and $n{-}1$ critical points. Consider the action of the
permutation group on the boundary points. This action naturally
defines $(l{-}1)!$ very nice M-morsifications in different
connected components but with the same set of boundary points and
the same polynomial. We put a new $l$-th boundary point to one of
$n{-}1$ critical points. The action of the permutation group on
the boundary points now defines $l!$ distinct M-morsifications
with the same set of boundary points and the same polynomial.
Therefore, we uniquely associate to the collection of $(l-1)!$
old connected components the collection of $(l-1)!$ new connected
components. This implies the statement of Lemma~\ref{l-a1}.
\end{proof}

Further we find the number of connected components of very nice
M-morsifications for which one of the boundary values is either
greater than all the critical values or less than all the critical
values, we denote this number by $\hat K_n^{l}$. Note that we
count twice connected components of all M-morsifications for
which all the critical values are contained in the interval with
endpoints at two boundary values.

\begin{lemma}\label{l-a2}
Let $n\ge 2$ and $l\ge 1$. Then the following identity holds.
$$
\hat K_n^l = 2lK_n^{l-1}
$$
\end{lemma}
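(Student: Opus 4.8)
The plan is to interpret $\hat K_n^l$ as a sum of two equal quantities and to evaluate each by a deletion--insertion bijection with the components of $B_n^{l-1}$. Since all values at critical points and boundary points of a very nice M-morsification are pairwise distinct, the global maximum of these $n-1+l$ values is attained at a unique point, and a boundary value is greater than all critical values precisely when the largest boundary value is. In that case this largest boundary value is in fact the global maximum of the whole configuration. Hence, if I let $A$ denote the number of connected components for which some boundary value exceeds all critical values (equivalently, the global maximum is attained at a boundary point) and $B$ the number for which some boundary value is less than all critical values, then the double-counting convention built into the definition of $\hat K_n^l$ (components sandwiching all critical values between two boundary values are counted twice) gives exactly $\hat K_n^l=A+B$.

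Next I would show $A=lK_n^{l-1}$. Starting from a very nice M-morsification representing one of the $K_n^{l-1}$ components of $B_n^{l-1}$, I choose a label $j\in\{1,\dots,l\}$ for a new boundary point, relabel the existing $l-1$ points order-preservingly onto $\{1,\dots,l\}\setminus\{j\}$, and assign to the new $j$-th boundary point a value larger than all critical values and all remaining boundary values. The crucial observation is that the admissible region for this new value is the half-line above the global maximum of the remaining configuration, which is connected; as the new value varies there and the base morsification varies within its component, none of the five bifurcation hypersurfaces is crossed, since the new value stays strictly above every critical and boundary value (so the strata (b), (c), (e) are avoided, while (a) and (d) concern only the polynomial and the positions of the boundary points). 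Thus the construction produces a well-defined connected component of $B_n^l$ whose global maximum sits at the boundary point $j$, and every such component arises exactly once by deleting its unique global-maximum boundary point and relabeling. This is a bijection onto the pairs (component of $B_n^{l-1}$, label $j$), giving $A=lK_n^{l-1}$. The symmetric argument with ``below all values'' in place of ``above all values'' yields $B=lK_n^{l-1}$, and therefore $\hat K_n^l=A+B=2lK_n^{l-1}$.

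The main obstacle I anticipate is the well-definedness of this correspondence at the level of connected components rather than individual morsifications: I must verify that inserting the extremal value never forces a crossing of a bifurcation hypersurface, and that the resulting component is independent both of the chosen representative of the $B_n^{l-1}$ component and of the particular extremal value assigned. This reduces to the connectivity of the ``above everything'' half-line together with the connectivity of the parameter region swept out as the base morsification ranges over its component; this is the delicate point to justify carefully, and once it is established the deletion map furnishes the inverse and the bijection is immediate.
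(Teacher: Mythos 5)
Your proof contains a genuine gap: it ignores the parity of $n$, which governs the branch structure of the polynomial and is precisely the point on which the paper's own case analysis turns. For even $n$ both infinite branches of $x^n+\lambda_2x^{n-2}+\cdots+\lambda_{n-1}x$ tend to $+\infty$, and this breaks your argument in two places. First, the claim $A=lK_n^{l-1}$ fails: the ``admissible region'' you invoke is connected only as a set of \emph{values}, but a connected component of the space of very nice M-morsifications is determined by the \emph{position} $b_j$ of the new boundary point, and for even $n$ every value above the global maximum is attained at two positions, one on each infinite branch. The set of admissible positions therefore has two connected components, and placing the new point on the left branch or on the right branch yields two distinct components of $B_n^l$ (along any path moving $b_j$ from one branch to the other, $b_j$ must at some moment coincide with a critical point, i.e.\ cross the boundary caustic). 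So your insertion map is one-to-two, the deletion map is two-to-one rather than a bijection, and in fact $A=2lK_n^{l-1}$ for even $n$. Second, the ``symmetric argument'' for $B$ fails completely for even $n$: an even-degree polynomial attains its global minimum at a critical point, so no boundary value can be less than all critical values, and $B=0$. The two errors cancel ($2l+0=l+l$), so your final total is numerically correct, but the intermediate claims and the arguments supporting them are false in the even case.

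The paper's proof avoids this by splitting into cases. For even $n$ it counts components according to which of the two branches (left or right) carries the boundary point of maximal value, so the factor $2l$ comes entirely from the ``above'' configurations; for odd $n$ (one branch to $+\infty$, one to $-\infty$) it obtains $l$ from the maximal boundary value on the right branch and $l$ from the minimal one on the left branch, with the double-counting convention in the definition of $\hat K_n^l$ absorbing the components where both occur. Your argument is essentially sound for odd $n$: there a value above all critical values is attained at a unique point, so the set of admissible positions is a connected half-line and your insertion--deletion bijection works as stated. To repair the proof you must treat even $n$ separately along the paper's lines.
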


\begin{proof} Let $n$ be even and $l\ge 1$. Then any M-morsification has two
branches tending to plus infinity. Consider the maximal boundary
value. If it is greater than all the critical values than it can
be attained only at a point corresponding to two branches
described above. Hence, for any very nice M-morsification with
$l{-}1$ boundary point we have exactly $l$ distinct
M-morsifications for which one of the boundary points $b_i$ where
$1\le i \le l$ is on the right branch, and the value at it is the
maximal among all the boundary and critical values (here we keep
the order of the rest points). The same reasoning is valid for
the boundary point with maximal value at the left branch. Hence,
we obtain $\hat K_{2n}^l = 2lK_{2n}^{l-1}$ for even $n$.

If $n$ is odd, then any M-morsification has two branches, one of
them tends to plus infinity, and the other tends to minus
infinity. By the same reason the statement of Lemma~\ref{l-a2}
hold for odd $n$. We remind that we count connected component
corresponding to the odd case twice.
\end{proof}

{\it Proof of Theorem~\ref{t-a1}}. Consider a connected component
of the set of very nice M-morsifications with $l{+}1$ boundary
points and $n{-}3$ critical points. Take any M-morsifica\-tion of
this component. Consider the last boundary point $b_{l+1}$ and
add to it a $\d$-shaped function concentrated in a small
neighborhood of this point such that the new M-morsification has
a critical point at $b_{l+1}$ with old boundary value. The value
of the second critical point that occurs while adding the
$\d$-shaped function is the maximal among all critical and
boundary values. New M-morsification has now $l$ boundary and
$n{-}1$ critical points. In the same way we can subtract
$\d$-shaped function. This provides us the decomposition of the
set of M-morsifications with $l$ boundary and $n{-}1$ critical
points into couples (corresponding to addition and subtraction of
$\d$-shaped function). For any connected component of the set of
very nice M-morsifications with $l{+}1$ boundary and $n{-}3$
critical points by the above procedure we bijectively associate a
couple of connected components of very nice M-morsifications with
$l$ boundary and $n{-}1$ critical points.

Consider an arbitrary connected component of the set of
M-morsifications of the boundary caustic with $n{-}1$ critical
point and $l$ boundary points. So, one of the boundary points is
critical, and in other boundary and critical points the values
are pairwise distinct. Consider the critical boundary point. Let
us move the boundary point a little to the right or to the left.
If in the critical point we have local maximum than we add
$\d$-shaped function in this critical point, otherwise we
subtract $\d$-shaped function. The critical value becomes then
maximal (or minimal respectively). For the boundary point there
are two positions in correspondence with the direction we have
moved it. This gives us to two very nice M-morsifications with
$n{-}1$ critical points and $l$ boundary points. Hence, we
associate to each of the components of the boundary caustics a
couple of components of very nice M-morsifications with maximal
or minimal critical values.

Finally, consider a component of very nice M-morsifications with
$n{-}1$ critical points and $l$ boundary points with one critical
value greater than all boundary values. Consider an
M-morsification of this component. Let $x_i$ be the point with
maximal critical value. Let us take critical or boundary points
that are the closest to $x_i$ from the left and from the right
respectively. Choose one of this two points with maximal value.
Let us make an operation inverse to the adding $\d$-shaped
function that ``pulls down'' the maximal critical value to the
level of the neighbor chosen point. The result can be of two
types. The first possibility is if we obtain a function with
double critical point. In this case we replace a double point by
a new boundary point $b_{l+1}$. The second possibility is if we
obtain a function having one critical point coinciding with one
boundary point.

We act in the same way for the case of a critical value being
less then a;; boundary values.

The above observations prove the following identity:
$$
2K_n^l - \hat K_n^l= 2L_n^l+2K_n^{l+1}.
$$
Now we apply Lemmas~\ref{l-a1} and~\ref{l-a2}:
$$
2K_n^l -2lK_n^{l-1} =2l(n-1)K_n^{l-1}+2K_{n-2}^{l+1}.
$$
Therefore,
$$
K_{n-2}^{l+1}=K_n^l-nlK_n^{l-1}.
$$
This concludes the proof of Theorem~\ref{t-a1}. \qed

\vspace{2mm}

In conclusion of this section we indicate the relation between
$K_n^l$ and the numbers of connected components of special strata
of bifurcation diagram of critical points and critical values for
degree $2l{+}n$ polynomials (i.e. of $A_{2l+n-1}$).

\begin{corollary}\label{strati}
Consider open strata of caustic of the singularity $A_{2l+n-1}$
corresponding to polynomials of M-domain with $l$-couples of
double critical points with all distinct critical values at
distinct critical points. The number of connected components of
such strata is~${K_n^l/l!}$.
\end{corollary}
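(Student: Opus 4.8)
The plan is to exhibit an explicit $l!$-to-one covering map from the connected components counted by $K_n^l$ onto the connected components of the stratum described in the corollary. The starting point is the earlier construction appearing repeatedly in the proof of Theorem~\ref{t-a1}: given a very nice M-morsification of $B_n^l$, one attaches at each of the $l$ boundary points $b_1,\ldots,b_l$ a suitable $\delta$-shaped bump (added at a boundary point sitting at a local maximum, subtracted at a local minimum), thereby converting each boundary point into a genuine double critical point of a polynomial while raising (resp.\ lowering) the accompanying newly created critical value to be extremal. Carrying this out at all $l$ boundary points simultaneously turns a degree $n$ polynomial with $l$ marked boundary points into a degree $n+2l$ polynomial lying in the M-domain, all of whose critical values are real and distinct, with exactly $l$ of the critical points forced into coincident pairs. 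This is precisely a point of the open caustic stratum of $A_{2l+n-1}$ named in the statement.

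First I would verify that this assignment is well defined on connected components: since the $\delta$-bump operation is a continuous deformation depending continuously on the morsification, and since being ``very nice'' (all values pairwise distinct) is an open condition preserved along the construction, a path in the space of very nice M-morsifications of $B_n^l$ maps to a path inside the stratum. Hence each connected component maps to a single component of the stratum. Next I would identify the fibers. The key point is that the corollary's stratum does \emph{not} carry a marking or ordering of the $l$ double critical points, whereas Definition~\ref{d-a2} insists on enumerating the $l$ boundary points; the group $S_l$ of permutations of the labels acts freely on the very nice M-morsifications (as emphasized in the remark following Definition~\ref{d-a2}, distinct labelings live in distinct connected components), and two labeled morsifications produce the same unlabeled degree $n+2l$ polynomial exactly when they differ by such a relabeling. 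This gives the free $S_l$-action whose orbits are the fibers, so the number of source components is $l!$ times the number of target components, yielding $K_n^l/l!$.

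The main obstacle I anticipate is establishing that the correspondence is genuinely a bijection on components once the $S_l$-action is quotiented out, i.e.\ surjectivity together with injectivity modulo relabeling. For surjectivity one must argue that every polynomial in the stratum arises from \emph{some} very nice M-morsification: given $l$ double critical points with all critical values distinct, the inverse $\delta$-shaped operation (``pulling apart'' each double point into a boundary point, the construction already used in the final paragraph of the proof of Theorem~\ref{t-a1}) recovers a $B_n^l$-morsification, and one checks that the distinctness of all critical values guarantees the resulting configuration is very nice. For injectivity one must confirm that if two labeled morsifications land in the same component of the stratum then they are connected after a suitable relabeling; here the delicate part is that the extremality requirement on the newly created critical values (each bump pushes its partner value to be maximal or minimal) must be shown not to disconnect fibers, which should follow because the choice of ``add versus subtract'' is dictated locally by whether the boundary point sits at a max or a min and is therefore locally constant along the stratum. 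Once these continuity and freeness checks are in place, the count $K_n^l/l!$ follows immediately.
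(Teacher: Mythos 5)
Your overall plan --- boundary points of a $B_n^l$-morsification correspond to double critical points of a degree $n{+}2l$ polynomial, the enumeration of boundary points gives a free action of the permutation group on connected components, and hence the stratum has $K_n^l/l!$ components --- is exactly what the paper intends (the corollary is stated without proof, as a byproduct of the constructions in the proof of Theorem~\ref{t-a1}), and your freeness and surjectivity arguments are sound. But your forward map, as written, does not land in the stratum. Adding a $\delta$-shaped function at a boundary point, in the sense used in the proof of Theorem~\ref{t-a1}, produces \emph{two distinct} new critical points: one at the boundary point carrying the old boundary value, and a second one whose value is extremal among all values. Doing this at all $l$ boundary points yields a very nice M-morsification of $A_{2l+n-1}$ with $2l+n-1$ pairwise distinct critical values at distinct points --- a point of the \emph{complement} of the caustic, not of the caustic stratum with $l$ coincident pairs. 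Your sentence ``with exactly $l$ of the critical points forced into coincident pairs'' is therefore inconsistent with the operation you describe: to land in the stratum you need the \emph{degenerate} version of the operation, in which the two critical points created at $b_i$ coincide, i.e.\ the polynomial acquires a horizontal-tangent inflection (a double root of the derivative) at $b_i$, with critical value equal to the old boundary value. This is precisely the inverse of the move used at the end of the proof of Theorem~\ref{t-a1} (``we replace a double point by a new boundary point''), and it is the operation your own surjectivity argument correctly invokes.

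Two related slips follow from this conflation. First, the parenthetical ``added at a boundary point sitting at a local maximum, subtracted at a local minimum'' misapplies the boundary-caustic step of the theorem's proof: in a very nice M-morsification the boundary points are \emph{regular} points of the polynomial (their values are distinct from all critical values), so each sits on a monotone arc and no max/min dichotomy arises; the local form of the degeneration is dictated by the slope of that arc. Second, the ``delicate part'' you anticipate about extremality of partner values is vacuous for the corrected map: a double critical point carries a single critical value, and the corollary's stratum imposes no extremality condition at all. Once you replace the bump by its degenerate version, the rest of your argument --- continuity on components, freeness of the relabelling action (a nontrivial relabelling changes the left-to-right assignment of labels to boundary points, which is constant on components since the double boundary caustic and double boundary Maxwell stratum are excluded), and the two-sided component-wise inverse given by smoothing each double point back into a marked boundary point --- goes through and yields the count $K_n^l/l!$.
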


\section{Corollaries of Theorem~\ref{t-a1}. Cases of small numbers of boundary points}

Let us give explicit formulae for $K_n^l$ for $l \le 5$.
In~\cite{Arn21} V.~I.~Arnold proved that the number of connected
components of very nice M-morsifications for singularity
$A_{n-1}$ equals to the n-th Bernoulli-Euler number, namely
$K(A_{n-1})=K_{n-1}$. It~\cite{Arn21} there is also a formula for
numbers $K(B_n)$ for multiboundary singularities of type $B_n$:
$K(B_n)=K_{n+1}$. So, we have: $K_n^0=K_{n-1}$, and
$K_n^1=K_{n+1}$.

Direct calculations lead to the results of the following
corollary of Theorem~\ref{t-a1}.
\begin{corollary}\label{c-a1}
The following expressions of $K_n^l$ in terms of Bernoulli-Euler
numbers are correct.
$$
\begin{array}{l}
K_n^2=K_{n+3}-(n+2)K_{n+1};\\
K_n^3=K_{n+5}-(3n+8)K_{n+3};\\
K_n^4=K_{n+7}-(6n+20)K_{n+5}+3(n+2)(n+4)K_{n+3};\\
K_n^5=K_{n+9}-(10n+40)K_{n+7}+(15n^2+110n+184)K_{n+5}\\
\ldots\\
\end{array}
$$
\end{corollary}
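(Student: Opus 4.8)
The plan is to derive all four formulae by iterating the main recurrence of Theorem~\ref{t-a1} together with the two boundary conditions $K_n^0=K_{n-1}$ and $K_n^1=K_{n+1}$. The recurrence, rewritten with the upper index raised, reads $K_{m}^{l+1}=K_{m+2}^{l}-(m+2)(l)K_{m+2}^{l-1}$ after substituting $n=m+2$; this expresses $K_m^{l+1}$ in terms of entries with superscript $l$ and $l-1$ but with the subscript raised by $2$. The strategy is therefore a straightforward induction on $l$: assuming we have already expressed $K_n^{l}$ and $K_n^{l-1}$ as linear combinations of Bernoulli-Euler numbers $K_{n+j}$ with polynomial-in-$n$ coefficients, we substitute these into the recurrence to obtain the expression for $K_n^{l+1}$.

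First I would establish the base cases directly from the boundary conditions, namely $K_n^0=K_{n-1}$ and $K_n^1=K_{n+1}$. Then, to obtain $K_n^2$, I apply the recurrence in the form $K_n^2=K_{n+2}^1-(n+2)K_{n+2}^0$ and substitute $K_{n+2}^1=K_{n+3}$ and $K_{n+2}^0=K_{n+1}$, which yields $K_n^2=K_{n+3}-(n+2)K_{n+1}$. For $K_n^3$ I would use $K_n^3=K_{n+2}^2-2(n+2)K_{n+2}^1$, substituting the just-derived formula for $K_{n+2}^2$ and the value $K_{n+2}^1=K_{n+3}$; collecting the coefficient of $K_{n+3}$ gives $1$ and the coefficient of $K_{n+3}$ coming from the $-2(n+2)K_{n+3}$ term combines to produce $-(3n+8)K_{n+3}$. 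The formulae for $K_n^4$ and $K_n^5$ follow by the same mechanical substitution, each time feeding the two previously computed rows into $K_n^{l+1}=K_{n+2}^l-l(n+2)K_{n+2}^{l-1}$ and regrouping by the index of the Bernoulli-Euler number.

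The main obstacle is purely bookkeeping rather than conceptual: the coefficients are polynomials in $n$ of growing degree, and each substitution shifts $n$ to $n+2$ inside these polynomials, so one must carefully re-expand $(n+2+c)$-type factors and collect like terms by the Bernoulli-Euler index $K_{n+j}$. For instance, when computing $K_n^4$ the term $3(n+2)(n+4)K_{n+3}$ arises from combining the degree-two coefficient in the $K_{n+2}^3$ row (after the $n\mapsto n+2$ shift) with the contribution of $-3(n+2)K_{n+2}^2$; verifying that the cross terms at each Bernoulli-Euler index sum correctly is where the only real care is needed. I expect no genuine difficulty beyond accurate algebra, so the proof amounts to carrying out these iterated substitutions and recording the resulting coefficients.
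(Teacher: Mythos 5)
Your proposal is correct and matches the paper's approach: the paper itself states only that ``direct calculations'' from Theorem~\ref{t-a1} with the boundary conditions $K_n^0=K_{n-1}$, $K_n^1=K_{n+1}$ yield these formulae, which is precisely your iterated-substitution induction on $l$ (and your derived coefficients check out, e.g.\ $-(n+4)-2(n+2)=-(3n+8)$ and $3(n+4)(n+6)+4(n+2)(3n+14)=15n^2+110n+184$). One small slip in your narrative: in the $K_n^3$ step the coefficient $1$ belongs to $K_{n+5}$, not $K_{n+3}$, and in the $K_n^4$ step the term $3(n+2)(n+4)K_{n+3}$ comes entirely from $-3(n+2)K_{n+2}^2$, since the $K_{n+2}^3$ row contains no $K_{n+3}$ term.
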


The exponential generating function for Bernoulli-Euler numbers is
the function
$$
K(t)=\tan(t)+\sec(t).
$$
Let us show exponential generating functions for the numbers
$B_n^l$ for $l \le 4$. Note that in the cases of $l=0$ and $l=1$
one can propose the exponential generating functions to be $K(t)$,
still in our notation we have
$$
\begin{array}{l}
K_0(t)=\int K(t)dt=
-\ln(\cos(t))+\ln(\tan(\frac{t}{2}+\frac{\pi}{4}))+C \quad \hbox{
and }\\
K_1(t)=K'(t)=\frac{1+K^2}{2}=\frac{1+\sin(t)}{\cos^2{t}}=\frac{1}{1-\sin(t)}\\
\end{array}
$$
respectively.

\begin{corollary}\label{c-a2}
The exponential generating functions for the cases $l=2,3,4$ are
as follows:
$$
\begin{array}{l}
K_2(t)=K'''(t)-(tK(t))''=
\frac{3\sin(t)-t\cos(t)}{(1-\sin(t))^2};\\
K_3(t)=(K''-3tK'+K)'''=
\frac{3}{(1-\sin(t))^3}\Bigl(\sin(t)(3\sin(t)+7)-3t\cos(t)(5+\sin(t))\Bigr);\\
K_4(t)=(K'''-6tK''+(3t^2+4)K'-3tK)^{(4)}=\\
\Bigl(\frac{3t^2}{1-\sin(t)}-\frac{3t\cos(t)}{(1-\sin(t))^2}(3-\sin(t))+
\frac{3(2-\sin(t))}{(1-\sin(t))^2} \Bigr)^{(4)}.\\
\end{array}
$$
\end{corollary}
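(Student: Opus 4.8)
The plan is to pass from the integers $K_n^l$ to their exponential generating functions $K_l(t)=\sum_n K_n^l\,t^n/n!$ and to convert the arithmetic recurrence of Theorem~\ref{t-a1} into a differential one. The only facts about generating functions I need are the two translation rules: the index shift $a_n\mapsto a_{n+1}$ becomes the derivative $d/dt$, and multiplication by the index $a_n\mapsto n\,a_n$ becomes the Euler operator $\theta=t\,d/dt$; combining them, $a_n\mapsto p(n)\,a_{n+k}$ corresponds to $p(\theta)\,f^{(k)}$ for any polynomial $p$. I would first record the base cases that come from the boundary conditions $K_n^0=K_{n-1}$ and $K_n^1=K_{n+1}$, namely $K_0(t)=\int K(t)\,dt$ and $K_1(t)=K'(t)$, both already displayed in the excerpt.

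Next I would rewrite Theorem~\ref{t-a1} as $K_m^{l+1}=K_{m+2}^{l}-(m+2)l\,K_{m+2}^{l-1}$ and apply the rules termwise. The shift by two turns $K_{m+2}^{l}$ into $K_l''$, while $(m+2)K_{m+2}^{l-1}$ becomes $(\theta+2)K_{l-1}''=t\,K_{l-1}'''+2K_{l-1}''=\bigl(t\,K_{l-1}'\bigr)''$, which gives the differential recurrence
$$
K_{l+1}(t)=K_l''(t)-l\,\bigl(t\,K_{l-1}'(t)\bigr)''.
$$
Iterating from the base cases produces the middle expressions of the corollary: $K_2=K_1''-(t\,K_0')''=K'''-(tK)''$, then $K_3=K_2''-2(t\,K_1')''$ and $K_4=K_3''-3(t\,K_2')''$. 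Expanding each into a sum $\sum_j t^{j}K^{(i)}$ and then recognizing that sum as an iterated derivative of a short polynomial-coefficient combination yields exactly the printed forms $K_3=(K''-3tK'+K)'''$ and $K_4=(K'''-6tK''+(3t^2+4)K'-3tK)^{(4)}$; this repackaging can be verified against the explicit formulas of Corollary~\ref{c-a1} by the same dictionary, giving an independent check.

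To obtain the closed forms I would substitute $K=\tfrac{1+\sin t}{\cos t}$, equivalently $K'=\tfrac{1}{1-\sin t}$, and differentiate. Since each derivative of $\tfrac{1}{1-\sin t}$ is again of the shape (trigonometric polynomial)$/(1-\sin t)^k$, every $K_l$ collapses over a common denominator $(1-\sin t)^k$. For example $K'''=\tfrac{2+\sin t}{(1-\sin t)^2}$ and $(tK)''=\tfrac{2(1-\sin t)+t\cos t}{(1-\sin t)^2}$, and their difference is $\tfrac{3\sin t-t\cos t}{(1-\sin t)^2}$, which is the stated value of $K_2$. The cases $l=3,4$ are the same computation carried one and two steps further, with $K_4$ left in the displayed form as the fourth derivative of its elementary primitive.

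The generating-function translation is routine; the main obstacle is bookkeeping of two kinds. First, the antidifferentiation step that rewrites the expanded sums $\sum_j t^{j}K^{(i)}$ as single high-order derivatives of the compact polynomial combinations (using operator identities such as $D\theta=(\theta+1)D$); one essentially has to guess the primitive and then verify it by differentiating back. Second, the closed-form simplification for $l=3$ and $l=4$, where $\tfrac{1+\sin t}{\cos t}$ must be differentiated several times and the numerator collected carefully over $(1-\sin t)^k$. Both are mechanical but error-prone, so I would rely on the two independent routes --- the recurrence above and the direct translation of Corollary~\ref{c-a1} --- as a consistency check.
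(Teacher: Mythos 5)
Your proposal is correct and is essentially the paper's own (implicit) argument: the corollary is presented as a direct calculation from Theorem~\ref{t-a1}, and your differential recurrence $K_{l+1}=K_l''-l\,(tK_{l-1}')''$ obtained from the dictionary $a_{n+k}\mapsto f^{(k)}$, $na_n\mapsto tf'$, together with the base cases $K_0'=K$, $K_1=K'$, reproduces exactly the printed operator forms, while substituting $K'=1/(1-\sin t)$ yields the closed expressions. One remark worth recording: carrying your $l=3$ computation to the end gives $K_3=\frac{3}{(1-\sin t)^3}\bigl(\sin t(3\sin t+7)-t\cos t(5+\sin t)\bigr)$, so the inner factor $3$ on the $t\cos(t)$ term in the paper's displayed closed form is a typo --- the printed version would give $K_1^3=-24$ instead of the required $K_1^3=3!=6$.
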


Let
$$
K(x,y)=\sum\frac{K^l_n}{l!n!}x^ly^n
$$
be an exponential generation function in two variables.
\begin{corollary}\label{c-aa1}
The function $K(x,y)$ satisfies the following differential
equation
$$
 K_x=(1-2x)K_{yy}-xyK_{yyy}.
$$
\end{corollary}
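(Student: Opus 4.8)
The plan is to convert the recurrence of Theorem~\ref{t-a1} into a statement about Taylor coefficients and then recognize the two sides of the desired equation as the generating series of the two sides of that recurrence. Concretely, starting from $K(x,y)=\sum K_n^l\,x^ly^n/(l!n!)$, I would extract the coefficient of $x^ly^n/(l!n!)$ in every term occurring in the equation, so that the identity reduces to a coefficientwise comparison; once all coefficients agree the two analytic functions must coincide.

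First I would differentiate and reindex. Lowering the power of $x$ by one raises the upper index, so the coefficient of $x^ly^n/(l!n!)$ in $K_x$ is $K_n^{l+1}$; similarly the coefficients in $K_{yy}$ and $K_{yyy}$ are $K_{n+2}^l$ and $K_{n+3}^l$. Multiplying by $x$, by $y$, or by $xy$ shifts indices and, after renormalising the monomials by $l!n!$, introduces the factors $l$ and $n$: the coefficient of $x^ly^n/(l!n!)$ in $2xK_{yy}$ is $2l\,K_{n+2}^{l-1}$, and in $xyK_{yyy}$ it is $ln\,K_{n+2}^{l-1}$. Assembling the right-hand side $(1-2x)K_{yy}-xyK_{yyy}$, its coefficient of $x^ly^n/(l!n!)$ becomes
$$
K_{n+2}^l-2l\,K_{n+2}^{l-1}-ln\,K_{n+2}^{l-1}=K_{n+2}^l-l(n+2)\,K_{n+2}^{l-1}.
$$

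It then remains to match this with the left-hand coefficient $K_n^{l+1}$. For $l\ge1$ this is precisely Theorem~\ref{t-a1} applied with its index equal to $n+2$ (note $n+2\ge2$ automatically), which gives $K_n^{l+1}=K_{n+2}^l-(n+2)l\,K_{n+2}^{l-1}$. The only situation lying outside the scope of that theorem is $l=0$, where the two $x$-multiplied terms drop out and one only needs $K_n^1=K_{n+2}^0$; this follows from the boundary conditions $K_n^1=K_{n+1}$ and $K_n^0=K_{n-1}$ recalled just before Theorem~\ref{t-a1}. Since the coefficients agree for all $l\ge0$ and $n\ge0$, the stated partial differential equation holds.

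I expect no deep obstacle: the entire content of the corollary is carried by Theorem~\ref{t-a1}, and the real work is the bookkeeping of index shifts and factorial renormalisations in the four derivative terms. The step most prone to error, and the one I would carry out most carefully, is keeping the combinatorial weights straight when multiplying $K_{yy}$ and $K_{yyy}$ by $x$ and $xy$, since it is exactly the factor $2l+ln=l(n+2)$ produced there that must reconstruct the coefficient $nl$ of Theorem~\ref{t-a1} after the index shift $n\mapsto n+2$.
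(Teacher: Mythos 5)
Your proposal is correct and follows exactly the route the paper intends: the paper states this as an immediate corollary of Theorem~\ref{t-a1} (giving no separate proof), and your coefficientwise translation --- matching $K_n^{l+1}$ against $K_{n+2}^l-l(n+2)K_{n+2}^{l-1}$, which is the recurrence shifted by $n\mapsto n+2$, with the $l=0$ case handled by the boundary conditions $K_n^1=K_{n+1}=K_{n+2}^0$ --- is precisely that intended derivation, carried out with the index bookkeeping done correctly.
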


B.~Z.~Shapiro proposed to consider two exponential generating
functions in two variables $R(x,y)$ and $S(x,y)$ separately for
$R_n^l=K_{2n}^l$ and for $S_n^l=K_{2n-1}^l$. This decreases the
order of the differential equation.

\begin{corollary}\label{c-aa2}
The functions $R(x,y)$ and $S(x,y)$ satisfy the following
differential equations:
$$
\begin{array}{c}
R_x=(1-2x)R_{y}-2xyR_{yy},\\
S_x=(1-2x)S_{y}-x(2y-1)S_{yy}.\\
\end{array}
$$
\end{corollary}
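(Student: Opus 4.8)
The plan is to obtain each equation directly from the recurrence of Theorem~\ref{t-a1}, using the observation that $n\mapsto n-2$ preserves the parity of $n$, so the even-indexed and odd-indexed numbers never mix. Specializing $K_{n-2}^{l+1}=K_n^l-nlK_n^{l-1}$ and reindexing, the substitution $n=2m+2$ gives a closed recurrence for $R$,
$$
R_m^{l+1}=R_{m+1}^l-2(m+1)l\,R_{m+1}^{l-1},
$$
and $n=2m+1$ gives a closed recurrence for $S$,
$$
S_m^{l+1}=S_{m+1}^l-(2m+1)l\,S_{m+1}^{l-1}.
$$
It is precisely this decoupling that makes two separate generating functions natural and, as will be seen, drops the differential order from three (as in Corollary~\ref{c-aa1}) to two.

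I would then translate each recurrence into a PDE through the dictionary for $G=\sum \frac{g_n^l}{l!\,n!}x^ly^n$: the coefficient of $\frac{x^ly^m}{l!\,m!}$ equals $g_m^{l+1}$ in $G_x$, equals $g_{m+1}^l$ in $G_y$, equals $l\,g_{m+1}^{l-1}$ in $xG_y$, and equals $lm\,g_{m+1}^{l-1}$ in $xyG_{yy}$. Matching the recurrence for $R$ term by term puts $R_x$ on the left and $R_y$ together with a correction $2(m+1)l\,R_{m+1}^{l-1}=2xR_y+2xyR_{yy}$ on the right, assembling into $R_x=(1-2x)R_y-2xyR_{yy}$; running the identical bookkeeping on the $S$-recurrence produces the corresponding second-order equation for $S$.

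The one delicate point, and the main obstacle, is the conversion of the factor $nl$ in the correction term. The factor $l$ is immediate and corresponds to multiplication by $x$. The factor $n$ is not a single derivative; rather, from
$$
\frac{n\,y^{\,n-1}}{(n-1)!}=\frac{y^{\,n-1}}{(n-2)!}+\frac{y^{\,n-1}}{(n-1)!}
$$
it acts as $y\partial_y^2+\partial_y$ on the $y$-series, so it simultaneously produces the second-order term $xyG_{yy}$ and an extra first-order term that must be merged with the $G_y$ coming from the leading part of the recurrence. The even coefficient $2(m+1)=2m+2$ splits as $2\cdot 1+2\cdot m$, whereas the odd coefficient $2m+1$ splits as $1+2\cdot m$; this difference in the constant part is exactly what distinguishes the first-order coefficient in the two equations and is the place where careful tracking matters. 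As a check, the resulting relations must reproduce the low-order values coming from the boundary data $K_n^0=K_{n-1}$ and $K_n^1=K_{n+1}$, for instance $R_1^2=8$ and $S_1^2=2$.
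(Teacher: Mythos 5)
Your setup is the natural one (and, since the paper states this corollary with no proof at all, presumably the intended one): split the recurrence of Theorem~\ref{t-a1} by parity into
$$
R_m^{l+1}=R_{m+1}^l-2(m+1)l\,R_{m+1}^{l-1},\qquad
S_m^{l+1}=S_{m+1}^l-(2m+1)l\,S_{m+1}^{l-1},
$$
and translate through the coefficient dictionary. Your dictionary entries are all correct, and the $R$-half of your argument is complete and sound: $2(m+1)l\,R_{m+1}^{l-1}$ is exactly the coefficient of $2xR_y+2xyR_{yy}$, giving $R_x=(1-2x)R_y-2xyR_{yy}$.

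The gap is the single sentence claiming that ``running the identical bookkeeping on the $S$-recurrence produces the corresponding second-order equation for $S$.'' It does not. Carrying your own splitting $2m+1=1+2m$ to the end, the correction $(2m+1)l\,S_{m+1}^{l-1}$ is the coefficient of $xS_y+2xyS_{yy}$, so your method yields
$$
S_x=(1-x)S_y-2xyS_{yy},
$$
which is \emph{not} the printed equation $S_x=(1-2x)S_y-x(2y-1)S_{yy}$; the two differ by $x(S_{yy}-S_y)$, which is not identically zero. Your own phrasing betrays the conflict: you say the even/odd difference shows up in the \emph{first-order} coefficient, yet the two printed equations share the first-order coefficient $(1-2x)$ and differ in the \emph{second-order} term. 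In fact the printed $S$-equation cannot follow from the recurrence at all: its term $xS_{yy}$ has coefficient $l\,S_{m+2}^{l-1}$, a shift by \emph{two} in the lower index, which a recurrence shifting by one can never produce. The numerical check you propose but do not perform settles it: comparing coefficients of $xy$, the printed equation demands $S_1^2=S_2^1-2S_2^0-2S_2^0+S_3^0=5-2-2+5=6$, whereas $S_1^2=K_1^2=K_4-3K_2=2$; your derived equation gives $5-1-2=2$, which is correct. So the bookkeeping, done honestly, \emph{refutes} the statement as printed (which appears to contain a misprint) rather than proving it; a complete write-up must either state the equation you actually derive and flag the discrepancy, or it proves nothing about the $S$-equation.
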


Finally, let us describe geometrical structure of $B_n^2$  with
schematic ``pictures'' of fibers in the bundle
$$
\pi :\hbox{\it$($M-domain$)$}\times \r^2 \rightarrow \hbox{\it
M-domain}.
$$
For any polynomial $f(x)$ we associate a skew-symmetric
polynomial $f(b_1)-f(b_2)$ in two variables $b_1$ and $b_2$. This
polynomial defines the double boundary Maxwell stratum and
boundary caustics in the fiber. Boundary caustics and and
boundary Maxwell stratum are defined by vertical and horizontal
straight lines and form a rectangular net, in which the curve
$f(b_1)=f(b_2)$ is inscribed. It is easy to draw combinatoric
pictures of such curves. For example, we draw the pictures for the
singularities $B_3^2$ and $B_4^2$ (see Figures~1 and~2). Notice
that the curve $f(b_1)=f(b_2)$ is a union of a straight line
$b_1=b_2$ with some curve of degree $n{-}1$. For instance, in the
case of $B_3^2$ it is the union of a straight line $b_1=b_2$ and
an ellipse (see Figure 1). Here a natural problem arises. {\it
Describe all combinatoric types of such pictures for general
$n$}. At present moment we do not know the answer to this problem.

\begin{figure}[tbp]
\input tommj.pic
\end{figure}

\section{Corollaries of Theorem~\ref{t-a1}. Connection with Bernoulli-Euler numbers}

In this part we introduce some expressions for the
numbers~$K_n^l$ similar to the expressions of
Corollary~\ref{c-a1}. Further we calculate $K_n^l$ for negative
$n$ satisfying $n\le -l$. These numbers lead to nice relations on
Bernoulli-Euler numbers.

\begin{corollary}\label{c-a3}
The following expression for $K_n^l$ is true:
$$
\begin{array}{l}
K_n^l=K_{n+2l-1}-\\
(\frac{l(l-1)}{2}n+\frac{(l+1)l(l-1)}{3})K_{n+2l-3}+\\
l(l-1)(l-2)(l-3)(\frac{1}{8}n^2+\frac{2l+1}{12}n+\frac{(l+1)(5l-2)}{90})
K_{n+2l-5}+\\
\sum\limits^{[\frac{l}{2}]+1}_{k=4} \Bigl( \Bigl(
\frac{(-1)^{k-1}} {2^{k-1}(k-1)!} \frac{l!}{(l-2k+2)!}
\sum\limits^{k-1}_{d=0}
(p_{k,d}(l)n^{d})\Bigr) K_{n+2(l-k)+1} \Bigr) ,\\
\end{array}
$$
where $p_{k,d}(x)$ is a polynomial of degree $k{-}d{-}1$ with
constant coefficients depending on $k$ and $d$.
\end{corollary}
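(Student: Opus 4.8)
\emph{Proof plan.} The plan is to argue by induction on $l$, using Theorem~\ref{t-a1} in the shifted form
$$
K_n^{l+1}=K_{n+2}^l-(n+2)l\,K_{n+2}^{l-1}
$$
(obtained by replacing $n$ with $n+2$) as the driving recurrence. Write the asserted identity as $K_n^l=\sum_{k\ge1}C_{l,k}(n)K_{n+2(l-k)+1}$ with $C_{l,k}(n)=A_{l,k}P_{l,k}(n)$, where $A_{l,k}=\frac{(-1)^{k-1}}{2^{k-1}(k-1)!}\frac{l!}{(l-2k+2)!}$ and $P_{l,k}(n)=\sum_{d=0}^{k-1}p_{k,d}(l)n^d$. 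Rather than prove uniqueness of such an expansion, I would \emph{define} $\tilde K_n^l:=\sum_{k\ge1}C_{l,k}(n)K_{n+2(l-k)+1}$ from the explicit coefficients and show $\tilde K_n^l=K_n^l$ by checking that $\tilde K$ satisfies the same boundary data and the same recurrence as $K$. The boundary cases are immediate: the factor $l!/(l-2k+2)!$ vanishes for every $k\ge2$ as soon as $l\le1$, leaving $\tilde K_n^0=K_{n-1}$ and $\tilde K_n^1=K_{n+1}$, which agree with the known $K_n^0$, $K_n^1$.

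For the inductive step I would insert the expansions of $K_{n+2}^l$ and $K_{n+2}^{l-1}$ into the right-hand side of the recurrence and, for each $k$, collect the coefficient of the single symbol $K_{n+2((l+1)-k)+1}$; since both sides are explicit linear combinations of the $K_m$, comparing these coefficients is legitimate with no independence hypothesis. This turns the recurrence for $\tilde K$ into the algebraic identity
$$
C_{l+1,k}(n)=C_{l,k}(n+2)-(n+2)l\,C_{l-1,k-1}(n+2).
$$
Dividing out the prefactors — one finds $A_{l,k}/A_{l+1,k}=(l-2k+3)/(l+1)$ and $A_{l-1,k-1}/A_{l+1,k}=-2(k-1)/\bigl(l(l+1)\bigr)$ — this reduces to
$$
(l+1)P_{l+1,k}(n)=(l-2k+3)P_{l,k}(n+2)+2(k-1)(n+2)P_{l-1,k-1}(n+2),
$$
with base $P_{l,1}\equiv1$ (so $C_{l,1}\equiv1$, the leading term $K_{n+2l-1}$).

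It remains to show this reduced recurrence propagates the claimed shape of $P_{l,k}$: monic of degree $k-1$ in $n$, with the coefficient $p_{k,d}(l)$ of $n^d$ a polynomial in $l$ of degree $k-d-1$. I would use a double induction, on $k$ and, inside each $k$, downward on $d$: extracting the coefficient of $n^d$ (the shift $n\mapsto n+2$ expresses it through the already-known higher-$d$ coefficients of $P_{l,k}$ and through the $(k-1)$-data) yields, for each $d$, the first-order relation $L[p_{k,d}](l)=g_{k,d}(l)$ with operator $L[f](l):=(l+1)f(l+1)-(l-2k+3)f(l)$. Comparing top coefficients in $n$ first gives $(l+1)p_{k,k-1}(l+1)=(l-2k+3)p_{k,k-1}(l)+2(k-1)p_{k-1,k-2}(l-1)$, which with $p_{k-1,k-2}\equiv1$ forces $p_{k,k-1}\equiv1$, so monicity persists.

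The one genuine obstacle is that solving $L[p_{k,d}]=g_{k,d}$ formally involves dividing by $l+1$, which could spoil polynomiality in $l$. What dissolves this is that $L$ has cancelling leading terms, so it sends polynomials of degree $m$ to polynomials of degree exactly $m$ (the $l^m$-coefficient of $L[f]$ equals $(m+2k-2)$ times the leading coefficient of $f$, nonzero for $k\ge2$); hence $L$ is invertible on polynomials, $p_{k,d}$ is automatically polynomial, and $\deg p_{k,d}=\deg g_{k,d}$. Tracking the latter — the $(k-1)$-data contributes the top piece $2(k-1)p_{k-1,d-1}(l-1)$ of degree $k-d-1$, whose leading coefficient one checks does not cancel — gives exactly $k-d-1$ and closes the induction. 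I expect this degree-bookkeeping through $L^{-1}$ to be the main technical burden; the explicit coefficients for $k=1,2,3$ recorded in the statement serve both as the base and as a consistency check, and Corollary~\ref{c-a1} is recovered by specializing to $l\le5$.
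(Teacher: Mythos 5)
Your proposal is correct and takes essentially the same approach as the paper: the paper's entire stated proof of Corollary~\ref{c-a3} is the single remark that it follows by induction on $l$ (via Theorem~\ref{t-a1}), which is exactly the induction you carry out. Your write-up supplies the details the paper omits --- the shifted recurrence, the coefficient identity $C_{l+1,k}(n)=C_{l,k}(n+2)-(n+2)l\,C_{l-1,k-1}(n+2)$, and the degree bookkeeping for $p_{k,d}$ through the operator $L$ --- and these check out.
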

The idea of the proof is based on the induction on $l$.

Still we do not know the explicit formulae for the polynomials
$p_{k,d}(x)$, nevertheless there exists a recursive method to find
the coefficients of such polynomials. Let us show the polynomials
for $d=k{-}1$, $d=k{-}2$, and $d=k{-}3$.

\begin{corollary}\label{c-a4}
The following hold:
$$
\begin{array}{l}
p_{k,k-1}(x)=1;\\
p_{k,k-2}(x)=\frac{(k-1)}{3}(2x+4-k);\\
p_{k,k-3}(x)=\frac{(k-1)(k-2)}{90}\Bigl(
20x^2+(72-20k)x+(5k^2-39k+64)\Bigr) .
\\
\end{array}
$$
\end{corollary}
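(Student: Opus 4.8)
The plan is to convert the recurrence of Theorem~\ref{t-a1} into a recurrence directly on the coefficient polynomials $p_{k,d}$ of Corollary~\ref{c-a3}, and then read off the three top diagonals $d=k-1$, $d=k-2$, $d=k-3$ one after another. First I would fix notation matching Corollary~\ref{c-a3}: write its expansion as $K_n^l=\sum_{k\ge 1}c_k\frac{l!}{(l-2k+2)!}P_k(l,n)K_{n+2(l-k)+1}$, where $c_k=\frac{(-1)^{k-1}}{2^{k-1}(k-1)!}$ and $P_k(l,n)=\sum_{d=0}^{k-1}p_{k,d}(l)n^d$, and abbreviate $A_k(l,n)=c_k\frac{l!}{(l-2k+2)!}P_k(l,n)$. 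Recall from Corollary~\ref{c-a3} that $\deg_l p_{k,d}=k-d-1$; in particular $P_k$ has degree $k-1$ in $n$ and $p_{k,k-1}$ is constant in $l$. Rewriting Theorem~\ref{t-a1} as $K_n^{l+1}=K_{n+2}^l-l(n+2)K_{n+2}^{l-1}$ and inserting the expansion on all three sides, every sum runs over the same Bernoulli--Euler numbers $K_{n+2l-2k+3}$ once $k$ is shifted to $k-1$ in the last sum; matching the coefficient of each $K_{n+2l-2k+3}$ gives $A_k(l+1,n)=A_k(l,n+2)-l(n+2)A_{k-1}(l-1,n+2)$. Substituting the prefactors and using $c_{k-1}=-2(k-1)c_k$ and $l\cdot(l-1)!=l!$, this collapses to the master recursion
\[
(l+1)P_k(l+1,n)=(l-2k+3)P_k(l,n+2)+2(k-1)(n+2)P_{k-1}(l-1,n+2),
\]
which is the ``recursive method'' referred to just before the statement.

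Next I would extract the coefficient of the top power $n^{k-1}$ in the master recursion. Only the leading terms of $P_k$ and $P_{k-1}$ survive, yielding
\[
(l+1)p_{k,k-1}(l+1)=(l-2k+3)p_{k,k-1}(l)+2(k-1)p_{k-1,k-2}(l-1).
\]
Since $\deg_l p_{k,k-1}=0$, each $p_{k,k-1}$ is a constant, and the constant $1$ satisfies this relation for every $k$, because the right-hand side then equals $(l-2k+3)+2(k-1)=l+1$. Together with the base value $p_{1,0}=1$ coming from $K_n^1=K_{n+1}$ (all terms $k\ge 2$ vanish at $l=1$ through the falling-factorial prefactor), an induction on $k$ gives $p_{k,k-1}(x)\equiv 1$.

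Having the leading diagonal, I would descend to $d=k-2$ and then $d=k-3$. Extracting the coefficient of $n^{k-2}$ (respectively $n^{k-3}$) now produces binomial cross-terms from the Taylor shifts $(n+2)^d$ inside $P_k(l,n+2)$ and $P_{k-1}(l-1,n+2)$, so the resulting first-order recurrence in $l$ for $p_{k,k-2}$ involves the already-known $p_{k,k-1}$ together with $p_{k-1,k-3}=p_{k-1,(k-1)-2}$, and similarly the recurrence for $p_{k,k-3}$ involves $p_{k,k-2}$ and $p_{k-1,k-4}$; in both cases the new unknown appears both on the left at argument $l+1$ and on the right at argument $l$. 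Because $\deg_l p_{k,k-2}=1$ and $\deg_l p_{k,k-3}=2$, each recurrence has a unique polynomial solution of the prescribed degree once one initial value is fixed, and the required base values (e.g.\ $p_{2,0}(2)=2$ and $p_{3,0}(3)$) can be read off from the explicit expressions for $K_n^2$ and $K_n^3$ in Corollary~\ref{c-a1}. I would then verify that the claimed closed forms $p_{k,k-2}(x)=\frac{k-1}{3}(2x+4-k)$ and $p_{k,k-3}(x)=\frac{(k-1)(k-2)}{90}\bigl(20x^2+(72-20k)x+(5k^2-39k+64)\bigr)$ satisfy the respective recurrences identically in $l$, thereby closing the induction on $k$.

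The main obstacle is the bookkeeping in this last step: expanding $(n+2)^d$ couples several neighbouring diagonals, so the recurrences for $p_{k,k-2}$ and $p_{k,k-3}$ are inhomogeneous with source terms assembled from lower diagonals evaluated at the shifted argument $l-1$, and one must check a single algebraic identity asserting that the proposed degree-$1$ and degree-$2$ polynomials satisfy them simultaneously for all $k$ and $l$. A secondary subtlety is the factor $(l+1)$ on the left of the master recursion: to conclude that the recurrence pins down a genuine polynomial rather than merely its values along an integer progression, one should treat the identity as an identity of polynomials in $l$, or equivalently supply enough base values from the small-$l$ data of Corollaries~\ref{c-a1} and~\ref{c-a2}.
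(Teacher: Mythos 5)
Your proposal is correct and is essentially the paper's own (unwritten) argument: the paper offers no proof of this corollary beyond the remark that a ``recursive method'' derived from Theorem~\ref{t-a1} and the induction behind Corollary~\ref{c-a3} determines the $p_{k,d}$, and your master recursion $(l+1)P_k(l+1,n)=(l-2k+3)P_k(l,n+2)+2(k-1)(n+2)P_{k-1}(l-1,n+2)$ is exactly that method. The diagonal extractions check out (for $d=k-1$ the recursion forces $p_{k,k-1}\equiv 1$, and for $d=k-2$ the claimed $\frac{k-1}{3}(2x+4-k)$ does satisfy $(l+1)p_{k,k-2}(l+1)=(l-2k+3)\bigl(p_{k,k-2}(l)+2(k-1)\bigr)+2(k-1)\bigl(2(k-1)+p_{k-1,k-3}(l-1)\bigr)$ identically), so your plan carries through as stated.
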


Applying formulae of Corollary~\ref{c-a3} one can obtain many
relations for Bernoulli-Euler numbers. Let us consider some
examples of such relations. Substitute $n=1$ and $n=2$ in the
formulae of Corollary~\ref{c-a3}, we get the equalities of the
following theorem.

\begin{theorem}\label{t-a2}
The following relations on Bernoulli-Euler numbers hold:
$$
\begin{array}{l}
K_{2l}-l!=K_{2l}-K_1^l= \sum\limits^{[\frac{l}{2}]+1}_{k=2} \Bigl(
\frac{(-1)^{k}} {2^{k-1}(k-1)!} \frac{l!}{(l-2k+2)!}
\sum\limits^{k-1}_{d=0}
p_{k,d}(l)\Bigr) K_{2(l-k+1)} ;\\

K_{2l+1}-2^{l}l!=K_{2l+1}-K_2^l=
\sum\limits^{[\frac{l}{2}]+1}_{k=2} \Bigl( \frac{(-1)^{k}}
{2^{k-1}(k-1)!} \frac{l!}{(l-2k+2)!} \sum\limits^{k-1}_{d=0}
(p_{k,d}(l)2^{d})\Bigr) K_{2(l-k+1)+1} .\\
\end{array}
$$
\end{theorem}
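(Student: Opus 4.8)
The plan is to prove Theorem~\ref{t-a2} exactly as the surrounding text indicates: by substituting $n=1$ and $n=2$ into Corollary~\ref{c-a3}. First I would rewrite that corollary as a single summation, absorbing the separately displayed $k=2$, $k=3$ terms (i.e. the $K_{n+2l-3}$ and $K_{n+2l-5}$ coefficients) into the general-$k$ sum. This merge is exactly what Corollary~\ref{c-a4} licenses: for instance the $k=2$ coefficient $\frac{l(l-1)}{2}n+\frac{(l+1)l(l-1)}{3}$ is $\frac{1}{2}l(l-1)\bigl(p_{2,1}(l)n+p_{2,0}(l)\bigr)$ once one inserts $p_{2,1}=1$ and $p_{2,0}(l)=\frac{2(l+1)}{3}$, and the $k=3$ coefficient matches $\frac{1}{8}l(l-1)(l-2)(l-3)\bigl(p_{3,2}n^{2}+p_{3,1}(l)n+p_{3,0}(l)\bigr)$ with the three polynomials of Corollary~\ref{c-a4}. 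After this bookkeeping, Corollary~\ref{c-a3} takes the uniform shape
\[
K_n^l=K_{n+2l-1}+\sum_{k=2}^{[l/2]+1}\Bigl(\frac{(-1)^{k-1}}{2^{k-1}(k-1)!}\frac{l!}{(l-2k+2)!}\sum_{d=0}^{k-1}p_{k,d}(l)n^{d}\Bigr)K_{n+2(l-k)+1},
\]
where the sign pattern $(-1)^{k-1}$ is consistent across the $k=2,3$ terms written out in the corollary.

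Second, I would record the two base values that the theorem places on its left-hand sides, namely $K_1^l=l!$ and $K_2^l=2^{l}l!$. The former is immediate: for $n=1$ the polynomial $x$ has no critical points, so a very nice M-morsification is nothing but a total order of the $l$ boundary values, giving $l!$ components. The value $K_2^l=2^{l}l!$ I would obtain by induction from the recurrence of Theorem~\ref{t-a1} with the base case $K_2^1=K_3=2$, checking it against the explicit formulae of Corollary~\ref{c-a1} (for example $K_2^2=K_5-4K_3=8$ and $K_2^3=K_7-14K_5=48$).

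Third comes the substitution. Putting $n=1$, the leading index $n+2l-1$ becomes $2l$, the running index $n+2(l-k)+1$ becomes $2(l-k+1)$, and $n^{d}=1$ collapses the inner sum to $\sum_{d=0}^{k-1}p_{k,d}(l)$. Moving $K_{2l}=K_{n+2l-1}$ to the left and using $(-1)^{k-1}=-(-1)^{k}$ then produces the first identity verbatim. Putting instead $n=2$, the leading index is $2l+1$, the running index is $2(l-k+1)+1$, and $n^{d}=2^{d}$ weights the inner sum as $\sum_{d=0}^{k-1}p_{k,d}(l)2^{d}$; the identical rearrangement yields the second identity. Both sums inherit the range $2\le k\le[l/2]+1$ from Corollary~\ref{c-a3}, which is the correct cutoff since $\frac{l!}{(l-2k+2)!}$ vanishes by the usual convention once $l-2k+2<0$.

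The substitution itself is purely mechanical, so the only genuine work lies in the first step: confirming that the mixed ``explicit-plus-general'' display of Corollary~\ref{c-a3} really does collapse to the single uniform sum above, i.e. that the hand-written low-$k$ coefficients agree term-by-term with the $p_{k,d}$ pattern of Corollary~\ref{c-a4}. That index-and-coefficient reconciliation is where an error would most easily creep in. Once it is checked, the remaining content is establishing $K_1^l=l!$ and $K_2^l=2^{l}l!$, both of which follow from elementary counting or from the already-proved recurrence, and the theorem drops out.
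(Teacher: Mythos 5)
Your proposal is correct and follows exactly the paper's own (very terse) argument: the paper proves Theorem~\ref{t-a2} simply by substituting $n=1$ and $n=2$ into Corollary~\ref{c-a3}, using the values $K_1^l=l!$ and $K_2^l=2^l l!$ that appear implicitly elsewhere (e.g.\ in the example $K_1^3=3!$ and in the proof of Proposition~\ref{l-a3}). Your additional bookkeeping --- reconciling the explicit $k=2,3$ coefficients with the general $p_{k,d}$ pattern via Corollary~\ref{c-a4}, and justifying the two base values by direct counting and by the recurrence of Theorem~\ref{t-a1} --- fills in details the paper leaves unstated, but the route is the same.
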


Let us use a expressions of Corollary~\ref{c-a1} to obtain the
numbers $K_n^l$, for $n\le 0$, and $n\le -l$.

\begin{center}
\begin{tabular}{|c|c|c|c|c|c|}
\hline
$K_n^l$ & l=1 & l=2 & l=3 & l=4 & l=5 \\
\hline
n=0     &  1  &  0  &  0  &  0  &  0  \\
\hline
n=-1    &  1  &  0  &  0  &  0  &  0  \\
\hline
n=-2    &  ?  &  1  &  0  &  0  &  0  \\
\hline
n=-3    &  ?  &  ?  &  2  &  0  &  0  \\
\hline
n=-4    &  ?  &  ?  &  ?  &  6  &  0  \\
\hline
n=-5    &  ?  &  ?  &  ?  &  ?  &  24 \\
\hline
\end{tabular}
\end{center}

The numbers $K_0^l$ can be interpreted as numbers of connected
components of the space of  degree $0$ polynomials with $l$
boundary points with distinct boundary values. Here we have only
one non-zero entry corresponding to $l=1$. The author does not
know any reasonable interpretation for the numbers for negative
$n$.

Notice that there are many zeroes in the table. Let us formulate
this statement more precisely.

\begin{proposition}\label{l-a3}
Let $n \le -1 $, then $K_n^l=0$ for $l > -n$ and
$K_n^{-n}=(-n{-}1)!$. The numbers $K_0^l=0$ for $l>1$.
\end{proposition}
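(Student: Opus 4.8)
The plan is to treat the recurrence of Theorem~\ref{t-a1} as a relation valid for all integers $n$: since the expressions of Corollary~\ref{c-a1} are polynomial identities in $n$, the numbers $K_n^l$ extend to $n\le 0$ and the recurrence continues to hold for every integer $n$. I will use it in the form $K_m^{l+1}=K_{m+2}^l-(m+2)lK_{m+2}^{l-1}$ and run an induction on $l$. Before starting, I would record two genuine (non-extended) facts that will anchor the induction. For $n=0$ the polynomial is a constant, so all $l$ boundary values coincide and distinctness is impossible once $l\ge 2$; this gives $K_0^l=0$ for $l>1$, which is already the last assertion of the Proposition. For $n=1$ the polynomial is linear and has no critical points, so a very nice M-morsification is just an ordering of the $l$ labelled boundary points on the line, whence $K_1^l=l!$ for all $l\ge 0$.

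Next I would prove by induction on $l$ the statement $Q(l)$: $K_n^l=0$ for $-l<n\le -1$, and $K_{-l}^l=(l-1)!$. The base cases $l=1$ (where $K_{-1}^1=K_0=1=0!$ and the zero-range is empty) and $l=2$ (a direct computation from $K_n^2=K_{n+3}-(n+2)K_{n+1}$) are immediate. For the inductive step, producing $Q(l+1)$ from $Q(l)$ and $Q(l-1)$ with $l\ge 2$, I would split the target indices $n\in\{-l,\dots,-1\}$ at level $l+1$ into three groups. The two topmost anti-positions reach the anchors: with $m=-1$ one gets $K_{-1}^{l+1}=K_1^l-lK_1^{l-1}=l!-l\,(l-1)!=0$, and with $m=-2$ the coefficient $(m+2)l$ vanishes, so $K_{-2}^{l+1}=K_0^l=0$ (here $l\ge 2$). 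For the interior indices $-l\le m\le -3$ one has $m+2\le -1$, and both $K_{m+2}^l$ and $K_{m+2}^{l-1}$ fall strictly inside the zero-ranges of $Q(l)$ and $Q(l-1)$ (never on their diagonals), so $K_m^{l+1}=0$.

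Finally the diagonal is obtained from the same recurrence with $m=-(l+1)$, i.e. $m+2=-(l-1)$: the first term $K_{-(l-1)}^l$ lies in the zero-range of $Q(l)$ and vanishes, while the second invokes the diagonal of $Q(l-1)$, giving
$$
K_{-(l+1)}^{l+1}=l(l-1)\,K_{-(l-1)}^{l-1}=l(l-1)(l-2)!=l!,
$$
which closes the induction and establishes $K_{-m}^m=(m-1)!$, equivalently $K_n^{-n}=(-n-1)!$.

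I expect the main obstacle to be conceptual rather than computational: a naive induction confined to the region $n\le -1$ does not close, because the recurrence for the top of each zero-range reaches back into the columns $n=0$ and $n=1$, which carry genuine and nonzero data. The crux is to recognise that these two columns must be pinned down independently, and that the precise value $K_1^l=l!$ is exactly what forces the cancellation $l!-l\,(l-1)!=0$ responsible for $K_{-1}^{l+1}=0$. Once that is in place, checking that the interior indices always land strictly inside the lower zero-ranges, and that the diagonal recursion telescopes to a factorial, is routine bookkeeping.
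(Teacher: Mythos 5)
Your proof is correct, and it is built from the same ingredients as the paper's --- the recurrence of Theorem~\ref{t-a1} run backwards into the region $n\le 0$, factorial anchor values, and an induction in which the diagonal telescopes to $(l-1)!$ --- but the induction is organized along the other axis, with a different set of anchors. The paper inducts on $-n$ (row by row): it computes the rows $n=0,-1,-2$ directly, using the values $K_1^l=l!$ \emph{and} $K_2^l=(2l)!!$ (the latter invoked without proof; note also a typo there, where $K_2^{l-2}$ should read $K_1^{l-2}$ in the $K_{-1}^l$ line), and then each row $n<-2$ follows from the two rows above it. You instead induct on $l$ (column by column, a two--step induction with bases $l=1,2$), pinning down the rows $n=1$ and $n=0$ by elementary geometric arguments (orderings of $l$ labelled points for a linear polynomial; impossibility of distinct boundary values for a constant one). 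Your version never needs $K_2^l=(2l)!!$, and the cancellation $l!-l\,(l-1)!=0$ you single out as the crux is exactly the computation hidden in the paper's $K_{-1}^l$ base case; the paper's row-by-row scheme is in exchange slightly more uniform, since after its three base rows every case is handled by one formula. One caveat applies equally to both arguments: Theorem~\ref{t-a1} is only proved for $n\ge 2$, so every use of the recurrence at $n\le 1$ presupposes the formal extension of $K_n^l$ via the expressions of Corollary~\ref{c-a1}. Your justification of this step (``polynomial identities in $n$'') is loose as stated, because the Bernoulli--Euler indices shift with $n$, so the recurrence is not literally a polynomial identity; the accurate statement is that the extended numbers satisfy the recurrence by construction, the formulas of Corollary~\ref{c-a1} being generated by iterating it from the columns $l=0,1$. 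Since the paper relies on exactly the same implicit convention, this is a shared imprecision rather than a gap in your argument.
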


\begin{proof} The proof is based on Theorem~\ref{t-a1}. Let us use
the induction on~$-n$.
$$
\begin{array}{l}
K_0^l=K_2^{l-1}-2(l-1)K_2^{l-2}=(2l-2)!!-(2l-2)(2l-4)!!=0\hbox{
for }l-2 \ge 0.\\
K_{-1}^l=K_1^{l-1}-(l-1)K_2^{l-2}=(l-1)!-(l-1)(l-2)!=0\hbox{ for
}l-2 \ge 0.\\
K_{-2}^l=K_0^{l-1}=0\hbox{ for }l-1 \ge 2.\\
\end{array}
$$
In general case we have
$$
\begin{array}{l}
K_n^l=K_{n+2}^{l-1}-(n+2)(l-1)K_{n+2}^{l-2} \hbox{ for }n<-2, l-2
\ge
-2-n.\\
K_n^{-n}=K_{n+2}^{-n-1}-(n+2)(-n-1)K_{n+2}^{-n-2}=0+(n+2)(n+1)(-n-3)!=(-n-1)!.
\end{array}
$$
This proves the statement of the proposition.
\end{proof}

We conclude the paper with relations for the Bernoulli-Euler
numbers that follows from the results of Corollary~\ref{c-a3} and
Proposition~\ref{l-a3}.

\begin{corollary}\label{t-a3}
Let $n \le 0$, $l>max(1,-n)$, then
$$
0=K_n^l=K_{n+2l-1} +\sum\limits^{[\frac{l}{2}]+1}_{k=2} \Bigl(
\Bigl( \frac{(-1)^{k-1}} {2^{k-1}(k-1)!} \frac{l!}{(l-2k+2)!}
\sum\limits^{k-1}_{d=0} (p_{k,d}(l)n^{d})\Bigr) K_{n+2(l-k)+1}
\Bigr) .
$$
If $l=-n$, then
$$
(l-1)!=K_n^{-n}=K_{l-1} +\sum\limits^{[\frac{l}{2}]+1}_{k=2}
\Bigl( \Bigl( \frac{(-1)^{k-1}} {2^{k-1}(k-1)!}
\frac{l!}{(l-2k+2)!} \sum\limits^{k-1}_{d=0}
(p_{k,d}(l)(-l)^{d})\Bigr) K_{l-2k+1} \Bigr) .
$$
\end{corollary}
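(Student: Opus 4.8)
The plan is to obtain both identities by matching two descriptions of the same quantity: the explicit Bernoulli--Euler expansion of Corollary~\ref{c-a3} and the evaluation of $K_n^l$ at non-positive $n$ supplied by Proposition~\ref{l-a3}. Using Corollary~\ref{c-a3} (and noting that its explicitly written $k=2,3$ terms agree, via Corollary~\ref{c-a4}, with the general summand, so that the whole expansion is the single sum below), one has
$$
K_n^l=K_{n+2l-1}+\sum_{k=2}^{[\frac{l}{2}]+1}\Bigl(\Bigl(\frac{(-1)^{k-1}}{2^{k-1}(k-1)!}\frac{l!}{(l-2k+2)!}\sum_{d=0}^{k-1}(p_{k,d}(l)n^{d})\Bigr)K_{n+2(l-k)+1}\Bigr).
$$
Imposing $n\le 0$ and $l>\max(1,-n)$, Proposition~\ref{l-a3} gives $K_n^l=0$, which turns the display into the first asserted relation; putting $n=-l$ gives $K_{-l}^l=(l-1)!$, and since then $n+2l-1=l-1$ and $n+2(l-k)+1=l-2k+1$, the display becomes the second relation.

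First I would justify that Corollary~\ref{c-a3} remains valid for the non-positive values of $n$ used here. That corollary is proved by induction on $l$ from Theorem~\ref{t-a1} and the boundary relations $K_n^0=K_{n-1}$, $K_n^1=K_{n+1}$; each reduction step is an identity whose coefficients are polynomials in $n$ and whose Bernoulli--Euler indices are shifted by fixed integers, so the displayed expansion is a polynomial identity in $n$ and persists for every integer $n$. For the first relation one then checks that no ill-defined symbol occurs: the smallest index appearing is $n+l-1$ for even $l$ and $n+l$ for odd $l$, and the hypothesis $l>\max(1,-n)$ forces both to be $\ge 0$, so every $K_{n+2(l-k)+1}$ is a genuine Bernoulli--Euler number.

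The one delicate point, and the main obstacle, is the case $l=-n$ with $l$ even. Here the term $k=\frac{l}{2}+1$ has index $l-2k+1=-1$, so the expansion formally involves $K_{-1}$, which lies outside the Bernoulli--Euler range. I would resolve this by the following consistency argument: Proposition~\ref{l-a3} evaluates $K_{-l}^l=(l-1)!$ through a reduction that never leaves the range $K_0,\dots,K_{l-1}$, so comparison with the expansion forces the coefficient of $K_{-1}$ to vanish, that is
$$
\sum_{d=0}^{l/2}p_{\frac{l}{2}+1,d}(l)(-l)^{d}=0 .
$$
Consequently the sum, although nominally running to $k=[\frac{l}{2}]+1$, is in fact a relation among honest Bernoulli--Euler numbers $K_{l-1},K_{l-3},\dots$; the same vanishing is visible directly in low cases, where a factor $(n+l)$ in the leading coefficient kills the offending term at $n=-l$ (as one sees for $l=2,4$ from Corollary~\ref{c-a1}). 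With this in hand the two substitutions are immediate and give both stated identities.
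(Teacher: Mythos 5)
Your proposal is correct and takes essentially the same route as the paper, which gives no explicit proof at all: it merely states that the corollary follows by combining the expansion of Corollary~\ref{c-a3} with the values of Proposition~\ref{l-a3}, exactly the substitution you perform. The additional care you supply --- checking nonnegativity of all Bernoulli--Euler indices under $l>\max(1,-n)$, and arguing that the coefficient of the formally undefined $K_{-1}$ vanishes when $l=-n$ is even --- addresses edge cases the paper silently ignores, and your vanishing claim is indeed true (one can also see it directly: the unique unfolding path of the recurrence of Theorem~\ref{t-a1} that terminates at $K_0^0=K_{-1}$ must use only superscript-drop-by-two steps and therefore passes through the step at $n=-2$, whose factor $n+2$ is zero, so the accumulated coefficient vanishes).
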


In particular for $l>1$ we have:
$$
0=K_0^l=K_{n+2l-1} +\sum\limits^{[\frac{l}{2}]+1}_{k=2} \Bigl(
\Bigl( \frac{(-1)^{k-1}} {2^{k-1}(k-1)!} \frac{l!}{(l-2k+2)!}
p_{k,0}(l)\Bigr) K_{2(l-k)+1} \Bigr) .
$$

\vspace{0.5in}
\end{document}